\newcommand{\Hil}[0]{
\mathcal{H} 
}
\newcommand{\norm}[2]{
\left\| #2 \right\|_{#1}
}
\newcommand{\BL}[0]{
{\mathcal B}
}
\newtheorem{theorem}{Theorem}[section]
\newtheorem{proposition}[theorem]{Proposition}
\newtheorem{lemma}[theorem]{Lemma}
\newtheorem{conj.}[theorem]{Conjecture}
\newtheorem{Bsp.}{Example}[section]
\newtheorem{remark}{Remark}[section]
\newenvironment{proof}{\noindent \bf Proof: \rm}{$ \hspace{\stretch{1}} \Box $

\vspace{5mm}}
\newcommand{\Op}{\mathcal O}
\newcommand{\Mat}{\mathcal M}
\newcommand{\TOp}{\tilde{\mathcal O}}
\newcommand{\range}[1]{\mathsf{ran}\left( #1 \right)} 
\newcommand{\identity}[1]{\mathsf{id}_{ #1 }}
\newcommand{\kernel}[1]{\mathsf{ker}\left( #1 \right)}
\title{Redundant Representation of Operators}
\author{Peter Balazs$^*$ and Georg Rieckh$^\dagger$}
\begin{document}

\maketitle

\thanks{${}^*$  Austrian Academy of Sciences, Acoustics Research Institute, Reichsratsstrasse 17, A-1010 Vienna, Austria, Peter.Balazs@oeaw.ac.at; \\ ${}^\dagger$ Information Processing in Biological Networks, Institute of Science and Technology, Am Campus 1, A-3400 Klosterneuburg, Austria}

\begin{abstract} To be able to solve operator equations numerically a discretization of those operators is necessary. 
In the Galerkin approach bases are used to achieve discretized versions of operators.
In a more general set-up, frames can be used to sample the involved signal spaces and therefore those operators.
Here we look at the redundant representation of operators resulting from a matrix representation using frames.
We focus on injectivity, surjectivity and, in particular, invertibility of the involved operators and matrices.
Furthermore we show sufficient conditions that the composition of matrices correspond to the composition of operators.
\\[2mm]
{\bf Keywords:} Frames, matrix representation, discretization of operators, invertibility, composition of operators.
\end{abstract}

\section{Introduction}

In applied mathematics, one often has to solve operator equations numerically.
In computational acoustics, for example, this is done to analyze sound fields and vibrations.
Here the finite element \cite{hack03} and the boundary element method \cite{sausch04} are widely used.
One particular scheme to discretize the operator equations is the Galerkin method \cite{Gauletal03}.
This corresponds to taking finite sections of the standard matrix description \cite{gohberg1} of operators $O$ using an ONB (or biorthogonal basis) $(e_k)$.
The corresponding matrix $M$ is then constructed by calculating its entries $M_{j,k} = \langle O e_k, e_j\rangle$. 
Depending on the operator, the $(e_k)$ in the above discretization scheme can be chosen such that the resulting matrix has certain desired properties.
These, in turn, can lead to a more efficient numerical solution.
If, however, one is restricted to bases, the search for $(e_k)$ with advantageous properties can prove to be difficult.

The generalization to frames \cite{casaz1,ole1} can relax these constrains.
Because of their (possible) over-completeness they have applications in signal processing and related fields.
Recently the representation of operators using frames has received some attention \cite{
aria07,xxlframoper1,rud11}. 
Certain operators, named multipliers, which have a diagonal matrix representation are of special interest in mathematics \cite{Arias2008581,xxlmult1,xxlframehs07} as well as acoustical applications \cite{xxllabmask1,DepKronTor07,majxxl10,tokr13}.
In this case the invertibility of such operators is the topic of current research \cite{balsto09new,uncconv2011}. 
Interestingly those kind of operators also play an important role as quantization operators \cite{aliant1,colgaz10,xxlbayasg11}. 
Multipliers can be also used to find a diagonalization of operators using frames, see \cite{Futamura20123201}.\\

In this paper we extend results about the representation of operators using frames from Ref.~\cite{xxlframoper1},
giving proofs about the invertibility and related properties of operators and the connected matrices.
Some of the new results have already been stated in \cite{xxlriek11} without proofs.

\section{Preliminaries and Notation}

We largely stick to the notation in \cite{xxlframoper1}. We will denote the (Moore-Penrose) pseudo-inverse of an operator $O$ by $O^\dagger$ (see e.g. \cite{ole1}).
Let us just remind the reader on the concept of frames:

\subsection{Frames}
For more details and proofs for this section refer e.g. to \cite{ole1,casaz1}.

A sequence ${\Psi} = \left( \psi_k | k \in K \right)$ 
 is called a \em frame \em for the Hilbert space $\Hil$, if constants $A,B > 0$ exist, such that 
\begin{equation} \label{sec:framprop1} A \cdot \norm{\Hil}{f}^2 \le \sum \limits_k \left| \left< f, \psi_k \right> \right|^2 \le B \cdot  \norm{\Hil}{f}^2  \ \forall \ f \in \Hil
\end{equation} 
Here $A$ is called a {\em lower} and $B$ an \em upper frame bound\em . 

For a Bessel sequence, $\Psi = ( \psi_k )$, let $C_{\Psi} : \Hil \rightarrow \ell^2 ( K )$ be the \em analysis  operator \em
$ C_{\Psi} ( f ) = \left( \left< f , \psi_k \right> \right)_k$. 
Let $D_{\Psi} : \ell^2( K ) \rightarrow \Hil $ be the  \em synthesis operator \em
$ D_{\Psi} \left( \left( c_k \right) \right) = \sum \limits_k c_k \cdot \psi_k $. 
Let $S_{\Psi} : \Hil  \rightarrow \Hil $ be the \em (associated) frame  operator \em
$ S_{\Psi} ( f  ) = \sum \limits_k  \left< f , \psi_k \right> \cdot \psi_k $. 
$C$ and $D$ are adjoint to each other, $D = C^*$ with $\norm{Op}{D} = \norm{Op}{C} \le \sqrt{B}$.  The series $\sum \limits_k c_k \cdot \psi_k$ converges unconditionally for all $(c_k) \in \ell^2$. 

For a frame ${\Psi} = ( \psi_k )$ with bounds $A,B$, $C$ is a bounded, injective operator with closed range and $S = C^*C = DD^*$ is a positive invertible operator satisfying $A I_\Hil \le S \le B I_\Hil$ and $B^{-1} I_\Hil \le S^{-1} \le A^{-1} I_\Hil$. Even more, we can find an expansion for every member of $\Hil$: 
The sequence $\tilde{\Psi} = \left( \tilde{\psi}_k \right) = \left( S^{-1} \psi_k \right)$ 
is a frame with frame bounds $B^{-1}$, $A^{-1} > 0$, the so called \em canonical dual frame\em .
 Every $f \in \Hil$ has the 
 expansions
$ f = \sum \limits_{k \in K} \left< f, \tilde{\psi}_k \right> \psi_k $
and 
$ f = \sum \limits_{k \in K} \left< f, \psi_k \right> \tilde{\psi}_k $
where both sums converge unconditionally in $\Hil$.

Two sequences $(\psi_k)$, $(\phi_k)$ are called \em biorthogonal \em 
 if 
$\left< \psi_k, \phi_j\right> = \delta_{kj}$ for all $h,j$.

A sequence $( \psi_k)$ in $\Hil$  is called a \em Riesz sequence \em if 
there exist constants $A$, $B >0$ such that the inequalities
$$ A \norm{2}{c}^2 \le \norm{\Hil}{\sum \limits_{k \in K} c_k \psi_k}^2 \le B \norm{2}{c}^2 $$
hold for all finite sequences $(c_k)$. It is called a {\em Riesz basis}, if it is complete as well.

For a frame $( \psi_k )$ the following conditions are equivalent:
\begin{enumerate}
\item[(i)] $( \psi_k )$ is a Riesz basis for $\Hil$.
\item[(ii)] The coefficients $( c_k ) \in \ell^2$ for the series expansion with $( \psi_k )$ are unique. So the synthesis operator $D$ is injective.
\item[(iii)] The analysis operator $C$ is surjective. 
\item[(iv)] $( \psi_k )$ and $( \tilde{\psi}_k )$ are biorthogonal.
\end{enumerate}
The {\em Gram matrix} $G_{\Psi, \Phi}$  is given by $\left( G_{\Psi, \Phi} \right)_{j,m} = \left< \phi_m , \psi_j \right>$, $j,m \in K$. Therefore as an operator form $\ell^2$ into $\ell^2$ $ G_{\Psi, \Phi} = C_{\Psi} \circ D_{\Phi}$. 
For a frame $G_{\Psi, \tilde \Psi}$ represents the projection on $\range{C_\Psi}$, denoted by $\Pi_{\range{C_\Psi}}$.

\subsection{Matrix representation of operators}
For orthonormal sequence it is well known, that operators can be uniquely described by a matrix representation \cite{gohberg1}. The same can be constructed with frames and their duals, see \cite{xxlframoper1}. 
Note that we will use the notation $\norm{\Hil_1 \rightarrow \Hil_2}{.}$ for the operator norm in $\BL(\Hil_1, \Hil_2)$ to be able to distinguish between different operator norms.
\begin{theorem} \label{sec:matbyfram1} Let $\Psi = (\psi_k)$ be a frame in $\Hil_1$ with bounds $A,B$, $\Phi = (\phi_k)$ in $\Hil_2$ with $A',B'$.
\begin{enumerate} \item Let $O : \Hil_1 \rightarrow \Hil_2$ be a bounded, linear operator. Then the infinite matrix 
$$ 
{\left( {\mathcal M}^{(\Phi , \Psi)} \left( O \right) \right)}_{m,n} = 
\left<O \psi_n, \phi_m \right>$$%
defines a bounded operator from $\ell^2$ to $\ell^2$ with $\norm{\ell^2 \rightarrow \ell^2}{\mathcal M} \le \sqrt{B \cdot B'} \cdot \norm{\Hil_1 \rightarrow \Hil_2}{O}$.
As an operator $\ell^2 \rightarrow \ell^2$ 
$$ {\mathcal M}^{(\Phi , \Psi)} \left( O \right) = C_{\Phi} \circ O \circ D_{\Psi} $$
This means the function ${\mathcal M}^{(\Phi , \Psi)} : \BL(\Hil_1,\Hil_2) \rightarrow \BL(\ell^2,\ell^2)$ is a well-defined bounded operator.
\item On the other hand let $M$ be an infinite matrix defining a bounded operator from $\ell^2$ to $\ell^2$, $\left(M c\right)_i = \sum \limits_k M_{i,k} c_k$. Then the operator $\mathcal{O}^{(\Phi , \Psi)}$ defined by 
$$ \left( \mathcal{O}^{(\Phi , \Psi)} \left( M \right)\right) h = \sum \limits_k  \left( \sum \limits_j M_{k,j} \left<h, \psi_j\right> \right) \phi_k \mbox{, for } h \in \Hil_1$$  
is a bounded operator from $\Hil_1$ to $\Hil_2$ with 
$$\norm{\Hil_1 \rightarrow \Hil_2}{\mathcal{O}^{(\Phi , \Psi)} \left( M \right)} \le \sqrt{B \cdot B'} \norm{\ell^2 \rightarrow \ell^2}{M}.$$
$$ \mathcal{O}^{(\Phi , \Psi)} (M) = D_{\Phi} \circ M \circ C_{\Psi} = \sum \limits_k  \sum \limits_j M_{k,j} \cdot \phi_k \otimes_i \overline{\psi}_j $$
This means the function ${\mathcal O}^{(\Phi , \Psi)} : \BL(\ell^2,\ell^2) \rightarrow \BL(\Hil_1,\Hil_2)$ is a well-defined bounded operator.
\end{enumerate} 
\end{theorem}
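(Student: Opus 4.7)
The plan is to reduce both parts to the observation that the matrix entries $\langle O\psi_n,\phi_m\rangle$ are precisely the matrix coefficients, in the standard basis $(e_k)$ of $\ell^2$, of the composition $C_\Phi\circ O\circ D_\Psi$; once this is seen, all boundedness and norm estimates follow from the bounds $\|C_\Phi\|_{\Hil_2\to\ell^2}\le\sqrt{B'}$ and $\|D_\Psi\|_{\ell^2\to\Hil_1}\le\sqrt{B}$ recalled in the preliminaries.

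For part 1, I would first note that $D_\Psi(e_n)=\psi_n$ and $C_\Phi(g)=(\langle g,\phi_m\rangle)_m$. Therefore
\[
\bigl\langle (C_\Phi\circ O\circ D_\Psi)e_n,\,e_m\bigr\rangle_{\ell^2}
 = \langle C_\Phi(O\psi_n),e_m\rangle_{\ell^2}
 = \langle O\psi_n,\phi_m\rangle,
\]
which shows that the formal matrix $\mathcal{M}^{(\Phi,\Psi)}(O)$ agrees with the bounded operator $C_\Phi\circ O\circ D_\Psi:\ell^2\to\ell^2$. The norm inequality $\|\mathcal M\|_{\ell^2\to\ell^2}\le\sqrt{BB'}\,\|O\|_{\Hil_1\to\Hil_2}$ is then immediate by submultiplicativity. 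Linearity of $O\mapsto C_\Phi\circ O\circ D_\Psi$ gives the well-definedness of $\mathcal M^{(\Phi,\Psi)}:\BL(\Hil_1,\Hil_2)\to\BL(\ell^2,\ell^2)$ as a bounded operator.

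For part 2, the plan is to reinterpret the double sum as the composition $D_\Phi\circ M\circ C_\Psi$. For $h\in\Hil_1$ the sequence $C_\Psi(h)=(\langle h,\psi_j\rangle)_j$ lies in $\ell^2$; since $M$ is bounded on $\ell^2$, the sequence $c=MC_\Psi(h)\in\ell^2$ has $k$-th component $c_k=\sum_j M_{k,j}\langle h,\psi_j\rangle$, the inner sum in the definition. Because $\Phi$ is a Bessel sequence, the synthesis operator $D_\Phi$ is bounded and the series $\sum_k c_k\phi_k$ converges unconditionally in $\Hil_2$, yielding
\[
\bigl(\mathcal O^{(\Phi,\Psi)}(M)\bigr)h=D_\Phi\bigl(M\,C_\Psi(h)\bigr).
\]
The norm estimate $\|\mathcal O^{(\Phi,\Psi)}(M)\|_{\Hil_1\to\Hil_2}\le\sqrt{BB'}\,\|M\|_{\ell^2\to\ell^2}$ then follows from the three factor bounds, and linearity in $M$ gives boundedness of $\mathcal O^{(\Phi,\Psi)}:\BL(\ell^2,\ell^2)\to\BL(\Hil_1,\Hil_2)$.

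Finally, the tensor-product representation is obtained by writing $(\phi_k\otimes_i\overline{\psi}_j)(h)=\langle h,\psi_j\rangle\phi_k$ and recognising the double sum as the same expression; the only subtlety is that one must argue that the partial sums converge to $D_\Phi\circ M\circ C_\Psi$ in the strong (not necessarily norm) operator topology, which again follows from the Bessel bounds applied to $C_\Psi h$ and to the coefficient sequence $MC_\Psi h$. The main obstacle, if any, is making the manipulation of the double series rigorous: handling it cleanly via the factorisation $D_\Phi\circ M\circ C_\Psi$ rather than by Fubini-type arguments on the entries avoids this difficulty entirely, which is why I would lead with the operator-composition identity throughout.
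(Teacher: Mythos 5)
Your proposal is correct and follows exactly the intended route: this theorem is quoted in the paper from the earlier reference on matrix representation of operators and is not reproved here, and the argument there is precisely your factorisations ${\mathcal M}^{(\Phi,\Psi)}(O)=C_\Phi\circ O\circ D_\Psi$ and ${\mathcal O}^{(\Phi,\Psi)}(M)=D_\Phi\circ M\circ C_\Psi$ combined with the Bessel bounds $\norm{}{C_\Phi}\le\sqrt{B'}$, $\norm{}{D_\Psi}\le\sqrt{B}$. Your handling of the only delicate points (identifying the entrywise matrix action with the composition via the standard basis, and the mode of convergence of the tensor-product expansion) is sound, so there is nothing to add.
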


For frames more properties were proved\cite{xxlframoper1}: 
\begin{proposition} \label{sec:propmatropfram1} Let $\Psi = (\psi_k)$ be a frame in $\Hil_1$ with bounds $A,B$, $\Phi = (\phi_k)$ in $\Hil_2$ with $A',B'$. Then
\begin{enumerate}
\item $ \left( {\mathcal O^{(\Phi , \Psi)} \circ M^{(\tilde{\Phi}, \tilde{\Psi})}}\right)  = \identity{\BL(\Hil_1,\Hil_2)} = \left( {\mathcal O^{(\tilde{\Phi}, \tilde{\Psi})} \circ M^{(\Phi , \Psi)}}\right) $. \\
And therefore for all $O \in \BL(\Hil_1,\Hil_2)$:
$$ O = \sum \limits_{k,j} \left<O \tilde{\psi}_j, \tilde{\phi}_k \right>  \phi_k \otimes_i \overline{\psi}_j $$
\item  $\mathcal{M}^{(\Phi , \Psi)}$ is injective and $\mathcal{O}^{(\Phi , \Psi)}$ is surjective.
\item Let $\Hil_1 = \Hil_2$, then $\mathcal{O}^{(\Psi, \tilde{\Psi})} (Id_{\ell^2}) = \identity{\Hil_1}$ 
\item Let $\Xi = (\xi_k)$ be any frame in $\Hil_3$, and $O : \Hil_3 \rightarrow \Hil_2$ and $P: \Hil_1 \rightarrow \Hil_3$. Then
$$ \mathcal{M}^{(\Phi, \Psi)}\left( O \circ P \right) = \left( \mathcal{M}^{(\Phi, \Xi)}\left( O \right) \cdot \mathcal{M}^{(\tilde{\Xi}, \Psi)} \left( P \right) \right) $$
\end{enumerate}

\end{proposition}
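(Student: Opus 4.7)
The plan is to reduce everything to the factorizations provided by Theorem~\ref{sec:matbyfram1}, namely $\mathcal{M}^{(\Phi,\Psi)}(O) = C_\Phi \circ O \circ D_\Psi$ and $\mathcal{O}^{(\Phi,\Psi)}(M) = D_\Phi \circ M \circ C_\Psi$, together with the single algebraic fact extracted from the canonical dual expansion in the preliminaries, namely $D_\Psi \circ C_{\tilde\Psi} = \identity{\Hil} = D_{\tilde\Psi}\circ C_\Psi$ (and analogously for $\Phi$, $\Xi$). Everything else is essentially bookkeeping.

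For (1) I would compose the two factorizations:
\begin{equation*}
\mathcal{O}^{(\Phi,\Psi)}\bigl(\mathcal{M}^{(\tilde\Phi,\tilde\Psi)}(O)\bigr) \;=\; D_\Phi \circ C_{\tilde\Phi} \circ O \circ D_{\tilde\Psi} \circ C_\Psi,
\end{equation*}
and collapse the outer pair $D_\Phi\circ C_{\tilde\Phi}$ and the inner pair $D_{\tilde\Psi}\circ C_\Psi$ to identities via the reconstruction formula, leaving $O$. The reverse order is symmetric, swapping the roles of the frames and their duals. To obtain the stated series representation I would then spell out $\mathcal{O}^{(\Phi,\Psi)}$ applied to the matrix with entries $\langle O\tilde\psi_j,\tilde\phi_k\rangle$, using the explicit form in Theorem~\ref{sec:matbyfram1}(2), which produces exactly $\sum_{k,j}\langle O\tilde\psi_j,\tilde\phi_k\rangle\,\phi_k\otimes_i\overline{\psi}_j$.

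Item (2) follows cost-free from (1): the identity $\mathcal{O}^{(\tilde\Phi,\tilde\Psi)}\circ\mathcal{M}^{(\Phi,\Psi)} = \identity{}$ exhibits a left inverse for $\mathcal{M}^{(\Phi,\Psi)}$ and a right inverse for $\mathcal{O}^{(\tilde\Phi,\tilde\Psi)}$; a parallel argument (using the other identity from (1)) takes care of injectivity/surjectivity of $\mathcal{M}^{(\Phi,\Psi)}$ and $\mathcal{O}^{(\Phi,\Psi)}$ respectively. Item (3) is a one-line check: $\mathcal{O}^{(\Psi,\tilde\Psi)}(Id_{\ell^2}) = D_\Psi \circ C_{\tilde\Psi} = \identity{\Hil_1}$ by the reconstruction formula.

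For (4) I would again insert the factorizations and observe
\begin{equation*}
\mathcal{M}^{(\Phi,\Xi)}(O)\cdot\mathcal{M}^{(\tilde\Xi,\Psi)}(P) \;=\; C_\Phi \circ O \circ \bigl(D_\Xi \circ C_{\tilde\Xi}\bigr)\circ P \circ D_\Psi,
\end{equation*}
and then kill the middle pair via $D_\Xi\circ C_{\tilde\Xi} = \identity{\Hil_3}$ to recover $C_\Phi\circ(O\circ P)\circ D_\Psi = \mathcal{M}^{(\Phi,\Psi)}(O\circ P)$. There is no genuine obstacle here; the only place where one must be careful is in (4), making sure that the intermediate frame $\Xi$ appears on one side and its dual $\tilde\Xi$ on the other so that the reconstruction identity applies in exactly the right direction, and in (1) keeping track of which of the two reconstruction identities ($D_\Psi\circ C_{\tilde\Psi}$ vs.\ $D_{\tilde\Psi}\circ C_\Psi$) is being used at each step.
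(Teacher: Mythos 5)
Your proof is correct, and it uses exactly the machinery the paper relies on throughout: the factorizations $\mathcal{M}^{(\Phi,\Psi)}(O)=C_\Phi\circ O\circ D_\Psi$, $\mathcal{O}^{(\Phi,\Psi)}(M)=D_\Phi\circ M\circ C_\Psi$ and the reconstruction identities $D_\Psi C_{\tilde\Psi}=\identity{\Hil}=D_{\tilde\Psi}C_\Psi$. The paper itself states this proposition without proof (citing \cite{xxlframoper1}), but your argument is the standard one and matches the style of the proofs the paper does give for its later results, so there is nothing to add.
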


\section{Properties of the Matrix Representation} \label{sec:descropfram0}

We can show some more connections of operators and their associated matrices:
\begin{proposition}\label{sec:proprep1} Let $\Phi$ and $\Psi$ be frames for $\Hil_1$ and $\Hil_2$ respectively.
Given $M \in \BL(l_2 , l_2)$, the following are equivalent:
\begin{enumerate}
\item[(i)] $\exists O \in \mathcal B (\Hil_1 , \Hil_2)$ such that $M=\Mat^{(\Phi,\Psi)}(O)$
\item[(ii)] $\exists M' \in \mathcal B (\ell_2, \ell_2) $ such that $ M=\Mat^{(\Phi,\Psi)} (\TOp (M'))$
\item[(iii)] $ \range{M} \subseteq \range{C_{\Phi}}$ and $\kernel{D_{\Psi}} \subseteq \kernel{M}$
\item[(iv)] $G_{\Phi, \tilde \Phi} \circ M \circ G_{\Psi, \tilde \Psi} = M$ 
\end{enumerate}
\end{proposition}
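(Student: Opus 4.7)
The plan is to prove (i) $\Leftrightarrow$ (ii) directly and then close the cycle (i) $\Rightarrow$ (iii) $\Rightarrow$ (iv) $\Rightarrow$ (i). The main ingredients are the factorisations $\mathcal{M}^{(\Phi,\Psi)}(O) = C_\Phi \circ O \circ D_\Psi$ and $\mathcal{O}^{(\Phi,\Psi)}(M) = D_\Phi \circ M \circ C_\Psi$ from Theorem~\ref{sec:matbyfram1}, the reconstruction identity $\mathcal{O}^{(\tilde\Phi,\tilde\Psi)} \circ \mathcal{M}^{(\Phi,\Psi)} = \identity{\BL(\Hil_1,\Hil_2)}$ from Proposition~\ref{sec:propmatropfram1}(1), and the fact (recalled in the preliminaries) that $G_{\Psi,\tilde\Psi}$ is the orthogonal projection of $\ell^2$ onto $\range{C_\Psi}$. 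The equivalence (i) $\Leftrightarrow$ (ii) is then almost a tautology: given (i), set $M':=M$, so that $\TOp(M') = \mathcal{O}^{(\tilde\Phi,\tilde\Psi)}(\mathcal{M}^{(\Phi,\Psi)}(O)) = O$, hence $\mathcal{M}^{(\Phi,\Psi)}(\TOp(M')) = M$; conversely, given (ii), the operator $O := \TOp(M')$ lies in $\BL(\Hil_1,\Hil_2)$ by Theorem~\ref{sec:matbyfram1}(2) and witnesses (i).

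For (i) $\Rightarrow$ (iii), the factorisation $M = C_\Phi O D_\Psi$ makes both inclusions immediate: the range of $M$ is contained in $\range{C_\Phi}$, and any $c \in \kernel{D_\Psi}$ satisfies $Mc = 0$. For (iii) $\Rightarrow$ (iv), I use that $G_{\Phi,\tilde\Phi}$ acts as the identity on $\range{C_\Phi}$, so $\range{M} \subseteq \range{C_\Phi}$ yields $G_{\Phi,\tilde\Phi}\circ M = M$; symmetrically, $I - G_{\Psi,\tilde\Psi}$ is the orthogonal projection onto $\range{C_\Psi}^{\perp} = \kernel{D_\Psi} \subseteq \kernel{M}$, so $M \circ (I - G_{\Psi,\tilde\Psi}) = 0$, i.e.\ $M \circ G_{\Psi,\tilde\Psi} = M$. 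Combining the two identities gives (iv).

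For the closing step (iv) $\Rightarrow$ (i), I take the natural candidate $O := \TOp(M) = D_{\tilde\Phi}\, M\, C_{\tilde\Psi} \in \BL(\Hil_1,\Hil_2)$ and compute
\[
 \mathcal{M}^{(\Phi,\Psi)}(O) \;=\; C_\Phi D_{\tilde\Phi}\, M\, C_{\tilde\Psi} D_\Psi \;=\; G_{\Phi,\tilde\Phi}\, M\, G_{\tilde\Psi,\Psi}.
\]
The subtlety, and the main obstacle, is that the Gram matrix on the right appears in the \emph{swapped} order $G_{\tilde\Psi,\Psi}$ rather than $G_{\Psi,\tilde\Psi}$ demanded by (iv). This is resolved by noting that $G_{\tilde\Psi,\Psi} = (C_\Psi D_{\tilde\Psi})^{*} = G_{\Psi,\tilde\Psi}^{*}$, and that $G_{\Psi,\tilde\Psi}$, being an \emph{orthogonal} projection, is self-adjoint; therefore $G_{\tilde\Psi,\Psi} = G_{\Psi,\tilde\Psi}$. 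Together with (iv), this yields $\mathcal{M}^{(\Phi,\Psi)}(O) = G_{\Phi,\tilde\Phi} M G_{\Psi,\tilde\Psi} = M$, which is (i) and closes the cycle.
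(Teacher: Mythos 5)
Your proposal is correct and uses essentially the same ingredients as the paper's proof: the factorisations $\Mat^{(\Phi,\Psi)}(O)=C_\Phi O D_\Psi$ and $\TOp(M)=D_{\tilde\Phi}MC_{\tilde\Psi}$, the reconstruction identity of Proposition~\ref{sec:propmatropfram1}~(1), and the fact that $G_{\Phi,\tilde\Phi}$, $G_{\Psi,\tilde\Psi}$ are the orthogonal projections onto $\range{C_\Phi}$, $\range{C_\Psi}$; only the arrangement of the implication cycle differs. A small bonus of your write-up is that you make explicit the identity $G_{\tilde\Psi,\Psi}=G_{\Psi,\tilde\Psi}$ (self-adjointness of the projection), which the paper uses silently when writing $\Mat^{(\Phi,\Psi)}(\Op^{(\tilde\Phi,\tilde\Psi)}(M))=G_{\Phi,\tilde\Phi}\circ M\circ G_{\Psi,\tilde\Psi}$.
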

\begin{proof}
(i) $\Rightarrow$ (ii) because $\Op$ is surjective by Prop. \ref{sec:propmatropfram1}. \\
(ii) $\Rightarrow$ (i) is trivial. \\
(i) $\Rightarrow$ (iv): By Proposition~\ref{sec:propmatropfram1}~(1) 
$$ M=\Mat^{(\Phi,\Psi)}(O) \Rightarrow M = \Mat^{(\Phi,\Psi)} \left( \Op^{(\tilde \Phi,\tilde \Psi)} (M) \right) =  G_{\Phi, \tilde \Phi}\circ M \circ G_{\Psi, \tilde \Psi} .$$
(iv) $\Rightarrow$ (ii) As $G_{\Phi, \tilde \Phi}\circ M \circ G_{\Psi, \tilde \Psi} = \Mat^{(\Phi,\Psi)} \left( \tilde \Op^{(\tilde \Phi, \tilde\Psi)} (M) \right)$ we get (ii).\\
(iv) $\Leftrightarrow$ (iii) This is because $G_{\Phi, \tilde \Phi}\circ M \circ G_{\Psi, \tilde \Psi} = \Pi_{\range{C_\Phi}} ~ M ~ \Pi_{\range{C_\Psi}}$. 
\end{proof}

\subsection{Injectivity, Surjectivity and Invertibility}

In particular for solving operator or matrix equations the invertibility of the involved systems is of interest. We can show:

\begin{lemma}\label{prop:jectivityofO}
Let $M\in \mathcal B (\ell_2, \ell_2)$. Let $\Phi$ and $\Psi$ be frames for $\Hil_1$ and $\Hil_2$, respectively.
  \begin{enumerate}
    \item[(i)] If and only if $\Pi_{\range{C_{\Phi}}}~ M$ is injective on $\range{C_{\Psi}}$, then $\Op^{(\Phi,\Psi)}(M)$ is injective. In particular:
    \begin{enumerate}
    \item[(i')] If $\range{M|_{\range{C_{\Psi}}}} \subseteq \range{C_{\Phi}} $ and $M$ injective on $\range{C_{\Psi}}$, then $\Op^{(\Phi,\Psi)}(M)$ is injective.
    \end{enumerate}
    \item[(ii)] If and only if $\Pi_{\range{C_{\Phi}}} ~ M$ is surjective from $\range{C_{\Psi}}$ onto $\range{C_{\Phi}}$, then $\Op^{(\Phi,\Psi)} (M)$ is surjective.  In particular:
     \begin{enumerate}
    \item[(ii')] If $\range{M|_{\range{C_{\Psi}}}} = \range{C_{\Phi}} $, then $\Op^{(\Phi,\Psi)}(M)$ is surjective.
    \end{enumerate}
  \item[(iii)]  If and only if $\Pi_{\range{C_{\Phi}}} ~ M$ is bijective as an operator from $\range{C_{\Psi}}$ onto $\range{C_{\Phi}}$, $\Op^{(\Phi,\Psi)} (M)$ is bijective. In particular:
     \begin{enumerate}
    \item[(iii')] If $M$ is invertible from $\range{C_{\Psi}}$ onto $ \range{C_{\Phi}}$, then $\Op^{(\Phi,\Psi)}(M)$ is invertible.
    In this case 
    $$O^{-1}=\Op^{(\tilde\Psi,\tilde\Phi)}(M^\dagger)= 
\Op^{\Phi, \Psi}\left(G_{\tilde \Phi, \tilde \Psi} M^\dagger G_{\tilde \Phi,\tilde\Psi}\right) = S_{\tilde \Phi, \tilde \Psi} \Op^{\Phi, \Psi}\left(M^\dagger \right) S_{\tilde \Phi,\tilde\Psi} ,$$
where the pseudo-inverse $M^\dagger = \left\{\begin{array}{c c} M_{|_\range{C_{\Psi}}}^{-1} & \mbox{ on } \range{C_{\Psi}} \\ 0 & \mbox{ otherwise } \end{array} \right.$.
    \end{enumerate}
 \end{enumerate}
\end{lemma}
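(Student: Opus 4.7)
The plan is to reduce the whole statement to the factorization $\Op^{(\Phi,\Psi)}(M) = D_\Phi \circ M \circ C_\Psi$ from Theorem~\ref{sec:matbyfram1}, combined with two structural facts from the preliminaries: $C_\Psi : \Hil_1 \to \range{C_\Psi}$ is a bijection, and $D_\Phi = C_\Phi^*$ is surjective onto $\Hil_2$ with $\kernel{D_\Phi} = (\range{C_\Phi})^\perp$, so $D_\Phi$ restricts to a bijection $\range{C_\Phi} \to \Hil_2$. Consequently, the whole behaviour of $O := \Op^{(\Phi,\Psi)}(M)$ is controlled by how $M$ acts between the two closed subspaces $\range{C_\Psi}$ and $\range{C_\Phi}$ of $\ell^2$.

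For (i), I would observe that $Of=0$ iff $MC_\Psi f \in \kernel{D_\Phi} = (\range{C_\Phi})^\perp$, i.e.\ $\Pi_{\range{C_\Phi}} M C_\Psi f = 0$; since $C_\Psi$ is bijective onto $\range{C_\Psi}$, this vanishes for every $f \in \Hil_1$ exactly when $\Pi_{\range{C_\Phi}} M$ is injective on $\range{C_\Psi}$. The special case (i') follows because the assumed inclusion makes the projection act as the identity on $M(\range{C_\Psi})$. For (ii), using that $D_\Phi$ bijects $\range{C_\Phi}$ onto $\Hil_2$, one gets $\range{O} = D_\Phi(M(\range{C_\Psi})) = D_\Phi(\Pi_{\range{C_\Phi}} M(\range{C_\Psi}))$ (since $D_\Phi$ kills the complementary part), and this equals $\Hil_2$ iff $\Pi_{\range{C_\Phi}} M$ surjects $\range{C_\Psi}$ onto $\range{C_\Phi}$; (ii') is immediate. (iii) and (iii') then combine (i) and (ii).

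It remains to verify the inverse formula in (iii'). Here I would check $\Op^{(\tilde\Psi,\tilde\Phi)}(M^\dagger) \circ O = \identity{\Hil_1}$ and $O \circ \Op^{(\tilde\Psi,\tilde\Phi)}(M^\dagger) = \identity{\Hil_2}$ by a direct expansion of the factorization, using $C_{\tilde\Phi} D_\Phi = G_{\tilde\Phi,\Phi} = \Pi_{\range{C_\Phi}}$ and $C_\Psi D_{\tilde\Psi} = \Pi_{\range{C_\Psi}}$, the identities $M^\dagger M = \Pi_{\range{C_\Psi}}$, $MM^\dagger = \Pi_{\range{C_\Phi}}$ coming from the pseudo-inverse, together with $D_\Phi C_{\tilde\Phi} = \identity{\Hil_2}$ and $D_{\tilde\Psi} C_\Psi = \identity{\Hil_1}$; the assumption $M(\range{C_\Psi}) \subseteq \range{C_\Phi}$ allows all the intermediate projections to collapse. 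The main obstacle I anticipate is converting this into the alternative forms $\Op^{(\Phi,\Psi)}(G_{\tilde\Phi,\tilde\Psi} M^\dagger G_{\tilde\Phi,\tilde\Psi})$ and $S_{\tilde\Phi,\tilde\Psi}\,\Op^{(\Phi,\Psi)}(M^\dagger)\,S_{\tilde\Phi,\tilde\Psi}$; this is a bookkeeping exercise that requires pushing $D_{\tilde\Psi} = S_\Psi^{-1} D_\Psi$ and $C_{\tilde\Phi} = C_\Phi S_\Phi^{-1}$ through the factorization and invoking the composition rule in Proposition~\ref{sec:propmatropfram1}(4), with particular care to keep the domain/codomain of $M^\dagger$ consistent. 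No new conceptual ingredient beyond the factorization and the dual-frame identities is needed.
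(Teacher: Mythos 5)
Your proposal takes essentially the same route as the paper: factor $\Op^{(\Phi,\Psi)}(M)=D_\Phi\,\Pi_{\range{C_\Phi}}\,M\,C_\Psi$, use that $C_\Psi$ bijects the Hilbert space onto $\range{C_\Psi}$ while $D_\Phi$ bijects $\range{C_\Phi}$ onto the target space (killing its orthogonal complement) to read off (i)--(iii), and then verify the inverse formulas by composing and collapsing the cross-Gram projections $C_\Psi D_{\tilde\Psi}=\Pi_{\range{C_\Psi}}$, $C_{\tilde\Phi}D_\Phi=\Pi_{\range{C_\Phi}}$ exactly as the paper does (the paper even checks only one composition and invokes bijectivity of $O$). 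One small caution: the identity $M^\dagger M=\Pi_{\range{C_\Psi}}$ is not valid on all of $\ell^2$ with the paper's $M^\dagger$ (nothing is assumed about $M$ off $\range{C_\Psi}$); you only need, and in your chain only use, $M^\dagger M=\identity{\range{C_\Psi}}$ on $\range{C_\Psi}$, i.e.\ immediately to the right of $C_\Psi$, so the argument is unaffected.
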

\begin{proof}
$D_\Psi$ is an invertible operator from $\range{C_{\Psi}}$ onto $\Hil$ and $C_\Phi$ from $\Hil$ onto $\range{C_{\Phi}}$. And because
$$ \Op^{(\Phi,\Psi)}(M) = D_\Phi M C_\Psi = D_\Phi ~ \Pi_{\ker{D_{\Phi}}^\perp} M C_\Psi = D_\Phi \Pi_{\range{C_{\Phi}}} M C_\Psi.$$
most of the results follow, immediately. 

Furthermore for (iii')
$$\Op^{(\Phi,\Psi)}(M) ~ \Op^{(\tilde\Psi,\tilde\Phi)}(M^{\dagger}) = D_\Phi ~ M ~ C_\Psi ~ D_{\tilde \Psi} ~ M^{\dagger} ~ C_{\tilde \Phi} = $$
$$ = D_\Phi M ~ \Pi_{\range{C_\Psi}} M^{-1} D_{\tilde \Phi} = D_\Phi M M^{-1} C_{\tilde \Phi} = D_\Phi C_{\tilde \Phi} = \identity{\Hil}. $$
As we know that $O$ is invertible, $O^{-1} =  \Op^{(\tilde\Psi,\tilde\Phi)}(M^{\dagger})$.

We can show
$$\Op^{\Phi, \Psi}\left(G_{\tilde \Phi \tilde \Psi} M^{\dagger} G_{\tilde \Phi, \tilde\Psi}\right) = \Op^{\Phi, \Psi}\left( C_{\tilde \Phi} D_{\tilde \Psi} M^{-1} C_{\tilde \Phi} D_{\tilde \Psi} \right) = $$
$$ =  D_\Phi C_{\tilde \Phi} D_{\tilde \Psi} M^{-1} C_{\tilde \Phi} D_{\tilde \Psi} C_\Psi = D_{\tilde \Phi} M^{-1} C_{\tilde \Psi} =  \Op^{(\tilde\Psi,\tilde\Phi)}(M^{-1}). $$

Finally
$$S_{\tilde \Psi, \tilde \Phi} \Op^{\Phi, \Psi}\left(M^\dagger \right) S_{\tilde \Psi,\tilde\Phi}v= D_{\tilde \Psi} C_{\tilde \Phi} D_{\Phi} M^{-1} C_\Psi D_{\tilde \Psi} C_{\tilde \Phi} = $$
$$ D_{\tilde \Psi} \Pi_{\range{C_\Phi}} M^{-1} C_\Psi D_{\tilde \Psi} C_{\tilde \Phi} = D_{\tilde \Psi} M^\dagger C_{\tilde \Phi} = \Op^{\widetilde \Phi, \widetilde \Psi}\left( M^\dagger \right).$$ 
\end{proof}

\begin{lemma}
Let $O\in \mathcal B (\Hil_1, \Hil_2)$ and $\Phi$ and $\Psi$ be frames for $\Hil_1$ and $\Hil_2$, respectively.
  \begin{enumerate}
    \item[(i)] If and only if $O$ is injective, $M=\Mat^{(\Phi,\Psi)}(O)$ is injective on $\range{C_{\Psi}}$.
    \item[(ii)] If and only if $O$ is surjective, $M=\Mat^{(\Phi,\Psi)}(O)$ is surjective from $\range{C_{\Psi}}$ onto $\range{C_{\Phi}}$.
    \item[(iii)] If and only if $O$ is bijective, $M=\Mat^{(\Phi,\Psi)}(O)$ is bijective as operator from $\range{C_{\Psi}}$ onto $\range{C_{\Phi}}$. In this case 
    $$M^{\dagger}= \Mat^{(\tilde \Psi, \tilde \Phi)} 
 (O^{-1})= G_{\tilde \Psi, \tilde \Phi}\circ \Mat^{(\Phi,\Psi)}\left(O^{-1}\right)
 G_{\tilde \Psi, \tilde \Phi} = \Mat^{(\Psi,\Phi)}\left(S_\Psi^{-1} O^{-1} S_\Phi^{-1}\right) . $$
    \end{enumerate}
\end{lemma}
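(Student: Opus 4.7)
The plan is to exploit the factorization given by Theorem~\ref{sec:matbyfram1}, which expresses $M = \Mat^{(\Phi,\Psi)}(O)$ as $O$ sandwiched between an analysis operator and a synthesis operator attached to $\Phi$ and $\Psi$. Two elementary frame-theoretic facts will do the heavy lifting: first, each analysis operator $C$ is a bijection from its Hilbert space onto $\range{C}$; second, the synthesis operator $D_\Psi$ restricted to $\range{C_\Psi}$ is a bijection onto the ambient Hilbert space, since $D_\Psi C_{\tilde\Psi} = \identity{\Hil}$ and $C_{\tilde\Psi} = C_\Psi S_\Psi^{-1}$ maps bijectively onto $\range{C_\Psi}$.

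With these facts, each of (i), (ii), (iii) becomes a straightforward diagram chase. The restriction/co-restriction of $M$ (to $\range{C_\Psi}$ on the domain side, $\range{C_\Phi}$ on the codomain side) is $O$ composed with two bijections, so the specified property of $M$ (injectivity on $\range{C_\Psi}$, surjectivity onto $\range{C_\Phi}$, or bijectivity between the two) is equivalent to the same property of $O$. Part (iii) is then simply the conjunction of (i) and (ii), and the whole statement is the ``symmetric partner'' of Lemma~\ref{prop:jectivityofO}.

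For the pseudoinverse identities in (iii), I would start from Lemma~\ref{prop:jectivityofO}(iii'), which provides an expression for $O^{-1}$ in terms of $M^\dagger$. Applying the appropriate $\Mat$ functor to both sides and using the round-trip identity in Proposition~\ref{sec:propmatropfram1}(1) yields $M^\dagger = \Mat^{(\tilde\Psi,\tilde\Phi)}(O^{-1})$ on $\range{C_\Psi}$; both sides then vanish on $\kernel{D_\Psi} = \range{C_\Psi}^\perp$ by Proposition~\ref{sec:proprep1}, which extends the equality to all of $\ell^2$. The middle form $G_{\tilde\Psi,\tilde\Phi}\,\Mat^{(\Phi,\Psi)}(O^{-1})\,G_{\tilde\Psi,\tilde\Phi}$ is obtained from the first by substituting $C_{\tilde\Psi} = C_\Psi S_\Psi^{-1}$ and $D_{\tilde\Phi} = S_\Phi^{-1} D_\Phi$ and recognising $G_{\tilde\Psi,\tilde\Phi} = C_{\tilde\Psi} D_{\tilde\Phi}$. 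The third form $\Mat^{(\Psi,\Phi)}(S_\Psi^{-1} O^{-1} S_\Phi^{-1})$ follows by absorbing the frame-operator inverses from the outer Gram matrices into the central operator $O^{-1}$.

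The main obstacle is bookkeeping: one must carefully track which frame is dual to which, the projections $\Pi_{\range{C_\Psi}}$ and $\Pi_{\range{C_\Phi}}$, the splittings $\ell^2 = \range{C_\Psi} \oplus \kernel{D_\Psi}$, and the orientation conventions relating $\Mat^{(\cdot,\cdot)}$ and $\Op^{(\cdot,\cdot)}$. No new technical ingredient beyond Lemma~\ref{prop:jectivityofO}, Proposition~\ref{sec:propmatropfram1}, and Proposition~\ref{sec:proprep1} is required.
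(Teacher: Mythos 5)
Your proposal is correct and, for the equivalences (i)--(iii), is exactly the paper's argument: the paper also writes $\Mat^{(\Phi,\Psi)}(O)$ as $O$ sandwiched between an analysis and a synthesis operator and appeals to the bijectivity of $C$ onto its range and of $D$ restricted to that range (``like in the last proof''). For the pseudo-inverse identities your route differs slightly from the paper's: there, all three formulas are verified by direct composition, e.g.\ $\Mat^{(\Phi,\Psi)}(O)\,\Mat^{(\tilde\Psi,\tilde\Phi)}(O^{-1})= C_\Phi O D_\Psi C_{\tilde\Psi}O^{-1}D_{\tilde\Phi}$, collapsing $D_\Psi C_{\tilde\Psi}$ to the identity, and similarly for the Gram-matrix and $S^{-1}$ forms; you instead obtain $M^{\dagger}=\Mat^{(\tilde\Psi,\tilde\Phi)}(O^{-1})$ from Lemma~\ref{prop:jectivityofO}(iii') combined with the round-trip identity of Proposition~\ref{sec:propmatropfram1}(1) and the range/kernel characterization of Proposition~\ref{sec:proprep1}, and then pass to the other two forms by the same substitutions ($C_{\tilde\Psi}=C_\Psi S_\Psi^{-1}$, $D_{\tilde\Phi}=S_\Phi^{-1}D_\Phi$, Gram matrices as projections) that the paper uses. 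Your route is actually a bit more careful on the Moore--Penrose point: the paper checks only one composition and labels the resulting operator the identity, although it is really the projection $G_{\Phi,\tilde\Phi}=\Pi_{\range{C_\Phi}}$, whereas your projection step $\Pi_{\range{C_\Psi}}\,M^{\dagger}\,\Pi_{\range{C_\Phi}}=M^{\dagger}$ supplies precisely the range and kernel conditions that identify $\Mat^{(\tilde\Psi,\tilde\Phi)}(O^{-1})$ as the pseudo-inverse; the paper's computation is in turn shorter and self-contained. One label in your sketch should be corrected: under the convention of Theorem~\ref{sec:matbyfram1} the two operators agree on $\range{C_\Phi}$ and both vanish on $\range{C_\Phi}^{\perp}=\kernel{D_{\tilde\Phi}}$ (the codomain side), not on $\kernel{D_\Psi}$ --- exactly the bookkeeping hazard you flag, made worse by the paper's own inconsistent assignment of $\Phi$ and $\Psi$ to $\Hil_1$ and $\Hil_2$; fixing this label does not change the argument.
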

\begin{proof}
With 
$ \Mat^{(\Phi,\Psi)} (O) = D_\Phi \, O \,  C_\Psi $
and the properties of $C_\Psi$ and $D_\Phi$ we have most of the wanted results like in the last proof.

For the inverse 
$$ \Mat^{(\Phi,\Psi)}(O) \circ \Mat^{(\tilde \Psi, \tilde \Phi)} (O^{-1}) = C_\Phi O D_\Psi C_{\tilde \Psi} O^{-1} D_{\tilde \Phi} =  \identity{\Hil}. $$

$$ G_{\tilde \Psi, \tilde \Phi}\circ \Mat^{(\Phi,\Psi)}(O^{-1})
 \circ G_{\tilde \Psi, \tilde \Phi} = C_{\tilde \Psi} D_{\tilde \Phi} C_\Phi O^{-1} D_\Psi C_{\tilde \Psi} D_{\tilde \Phi} =  $$
$$ =  C_{\tilde \Psi} O^{-1} D_{\tilde \Phi} = \Mat^{(\tilde \Psi, \tilde \Phi)} 
 (O^{-1}). $$
 
Finally
$$ \Mat^{(\tilde \Psi, \tilde \Phi)}  (O^{-1}) )=  C_{\tilde \Psi} O^{-1} D_{\tilde \Phi} = \Pi_{\range{C_\Psi}}  C_{\tilde \Psi} O^{-1} D_{\tilde \Phi} \Pi_{\kernel{D_{\tilde \Phi}}} = $$
$$ = C_{\Psi} D_{\tilde \Psi}  C_{\tilde \Psi} O^{-1} D_{\tilde \Phi} C_{\tilde \Phi} D_{\Phi} = \Mat^{(\Psi,\Phi)}\left(S_\Psi^{-1} O^{-1} S_\Phi^{-1}\right). $$
\end{proof}

\subsubsection{Invertibility and Riesz bases}

\begin{theorem}
Let $M$ be an infinite matrix defining a bounded operator from $\ell_2$ to $\ell_2$ and $\Phi$ and $\Psi$ be frames for a Hilbert~space~$\Hil$.
Then the following are equivalent:
\begin{itemize}
\item[(i)] For all matrices $\mathcal B (\ell_2, \ell_2)$ the following are equivalent:
    \begin{itemize}
      \item[(a)]$M$ is bijective on $l_2$.
      \item[(b)]$\Op^{(\Phi, \Psi)}(M)$ is bijective.
    \end{itemize}
\item[(ii)] Both, $\Phi$ and $\Psi$ are Riesz bases.
\end{itemize}
\end{theorem}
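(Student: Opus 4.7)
The plan is to prove the two directions separately, with $(ii) \Rightarrow (i)$ being an immediate consequence of Theorem~\ref{sec:matbyfram1} and the substantive work going into the converse.

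For $(ii) \Rightarrow (i)$ I would invoke the list of equivalences for frames recalled in Section~2.1: a frame is a Riesz basis iff its analysis operator is surjective, equivalently iff its synthesis operator is injective. Hence if both $\Phi$ and $\Psi$ are Riesz bases, then $D_\Phi$ and $C_\Psi$ are bijective bounded operators between the relevant Hilbert spaces. The factorization $\Op^{(\Phi,\Psi)}(M) = D_\Phi\, M\, C_\Psi$ from Theorem~\ref{sec:matbyfram1} then transfers bijectivity in both directions between $M$ and $\Op^{(\Phi,\Psi)}(M)$.

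For $(i) \Rightarrow (ii)$ I would argue by contrapositive. Assume, without loss of generality, that $\Psi$ is not a Riesz basis. Then $\range{C_\Psi}\subsetneq \ell^2$, and so $\kernel{D_\Psi} = \range{C_\Psi}^\perp$ contains a unit vector $v$. Let $P_v$ denote the rank-one orthogonal projection onto $\mathrm{span}\{v\}$ and put $M := \identity{\ell^2} - P_v$. Since $Mv = 0$, the matrix $M$ is not injective, and in particular not bijective. On the other hand, $P_v\, C_\Psi = 0$, since $C_\Psi$ takes values in $\mathrm{span}\{v\}^\perp$, so by Theorem~\ref{sec:matbyfram1}
\begin{equation*}
\Op^{(\Phi,\Psi)}(M) \;=\; D_\Phi\, M\, C_\Psi \;=\; D_\Phi\, C_\Psi \;=\; \Op^{(\Phi,\Psi)}(\identity{\ell^2}).
\end{equation*}
Now I would split on whether $\Op^{(\Phi,\Psi)}(\identity{\ell^2})$ itself happens to be bijective. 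If it is, then $M$ is a matrix which is not bijective yet $\Op^{(\Phi,\Psi)}(M)$ is bijective, violating (b)$\Rightarrow$(a). If it is not, then $\identity{\ell^2}$ is bijective but $\Op^{(\Phi,\Psi)}(\identity{\ell^2})$ is not, violating (a)$\Rightarrow$(b). Either way (i) fails. The case that $\Phi$ fails to be a Riesz basis is symmetric: pick a unit $w \in \kernel{D_\Phi}$ and use the same rank-one perturbation $M = \identity{\ell^2} - P_w$, observing this time that $D_\Phi\, P_w = 0$.

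The only subtle point---really the only obstacle---is the dichotomy at the end: one does not need to decide a priori whether $\Op^{(\Phi,\Psi)}(\identity{\ell^2})$ is bijective, because the contradiction with (i) can be extracted from whichever side of the dichotomy one lands on. Conceptually, the construction simply exploits the fact that the matrix-to-operator map $\Op^{(\Phi,\Psi)}$ has a nontrivial kernel precisely when a frame is genuinely redundant, and that this kernel contains a rank-one perturbation of $\identity{\ell^2}$ which breaks bijectivity at the matrix level without changing anything at the operator level.
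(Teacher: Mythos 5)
Your proof is correct, but it follows a genuinely different route from the paper's. For (ii) $\Rightarrow$ (i) the paper simply cites the Riesz-basis results of \cite{xxlframoper1} (Theorem 3.5), whereas you give a short self-contained argument from the bijectivity of $D_\Phi$ and $C_\Psi$ in the factorization $\Op^{(\Phi,\Psi)}(M)=D_\Phi M C_\Psi$ --- same substance, just spelled out. The real divergence is in (i) $\Rightarrow$ (ii): the paper argues directly, testing (i) on the single matrix $M=C_{\tilde\Phi}D_{\tilde\Psi}$, for which $\Op^{(\Phi,\Psi)}(M)=D_\Phi C_{\tilde\Phi}D_{\tilde\Psi}C_\Psi=\identity{\Hil}$ is trivially bijective; the implication (b) $\Rightarrow$ (a) then makes $C_{\tilde\Phi}D_{\tilde\Psi}$ bijective, so $D_{\tilde\Psi}$ is injective and $C_{\tilde\Phi}$ is surjective, and both $\Psi$ and $\Phi$ are Riesz bases in one stroke, with no case distinction. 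You instead argue by contrapositive: if one frame is redundant you exploit the nontrivial kernel of the map $\Op^{(\Phi,\Psi)}$ itself, perturbing the identity by a rank-one projection ($M=\identity{\ell^2}-P_v$ with $v\in\kernel{D_\Psi}=\range{C_\Psi}^\perp$, or $v\in\kernel{D_\Phi}$ in the symmetric case) so that $\Op^{(\Phi,\Psi)}(M)=\Op^{(\Phi,\Psi)}(\identity{\ell^2})$ while $M$ loses injectivity, and then the dichotomy on whether $\Op^{(\Phi,\Psi)}(\identity{\ell^2})$ is bijective kills (i) either way. Your construction is more elementary --- it needs no cleverly chosen cross-Gram matrix and makes transparent that failure of (i) is exactly failure of injectivity of $\Op^{(\Phi,\Psi)}$ --- at the price of a case split over which frame fails and the final dichotomy; the paper's choice of test matrix is slicker and yields both Riesz-basis conclusions simultaneously from a single application of (i).
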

\begin{proof} (ii) $\Longrightarrow$ (i) 
This direction is clear by the properties of $\Mat$ and $\Op$ for Riesz bases (\cite{xxlframoper1} Theorem 3.5).

(i) $\Longrightarrow$ (ii)
Set $M = C_{\tilde \Phi} D_{\tilde \Psi}$, then $\Op^{(\Phi, \Psi)}(M) = D_\Phi C_{\tilde \Phi} D_{\tilde \Psi} C_\Psi = \identity{\Hil}$. By assumption $C_{\tilde \Phi} D_{\tilde \Psi}$ is bijective, in particular $D_{\tilde \Psi}$ is injective, therefore ${\tilde \Psi}$ and $\Psi$ are Riesz bases.
\end{proof}

\section{Decomposition}
For frames we know
$$ \mathcal{M}^{(\Phi, \Psi)}\left( O \circ P \right) = \left( \mathcal{M}^{(\Phi, \Xi)}\left( O \right) \cdot \mathcal{M}^{(\tilde{\Xi}, \Psi)} \left( P \right) \right). $$
Can similar properties for $\Op^{(\Phi, \Psi)}$ be shown?

\subsection{Decomposition and Riesz bases}

The following statement is the analogue of Proposition~\ref{sec:propmatropfram1}~(4) 
for $\mathcal O^{(\Phi, \Psi)}$. It provides conditions under which, also $\mathcal O^{(\Phi, \Psi)}$ is `well-behaved':
\begin{theorem}\label{prop:decompriesz}
Let $\Phi$, $\Xi$, and $\Psi$ be frames for $\Hil^1$, $\Hil^2$, and $\Hil^3$ resp., and $M^{(1)}$ and $M^{(2)}$ be infinite matrices defining bounded operators from $\ell_2$ to $\ell_2$. Then the following are equivalent:
\begin{itemize}
\item[(i)] $
  \mathcal O^{(\Phi, \Psi)}(M^{(1)} \cdot M^{(2)})
  =
  \mathcal O^{(\Phi, \Xi)}(M^{(1)}) \circ \mathcal O^{(\tilde \Xi, \Psi)}(M^{(2)})
  \quad \forall M^{(1)}, M^{(2)} \in \mathcal B(\ell_2, \ell_2)$
\item[(ii)] $\Xi$ is a Riesz basis.
\end{itemize}
\end{theorem}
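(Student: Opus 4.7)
The main structural observation I would make first is that, since $\mathcal{O}^{(\Phi,\Psi)}(M) = D_\Phi M C_\Psi$ and $C_\Xi D_{\tilde \Xi} = G_{\Xi,\tilde\Xi} = \Pi_{\range{C_\Xi}}$, the two sides of the composition identity differ only by the insertion of this projection:
\begin{align*}
\mathcal{O}^{(\Phi,\Psi)}(M^{(1)}M^{(2)}) &= D_\Phi M^{(1)} M^{(2)} C_\Psi,\\
\mathcal{O}^{(\Phi,\Xi)}(M^{(1)}) \circ \mathcal{O}^{(\tilde\Xi,\Psi)}(M^{(2)}) &= D_\Phi M^{(1)} \Pi_{\range{C_\Xi}} M^{(2)} C_\Psi.
\end{align*}
So the identity in (i) is equivalent to $D_\Phi M^{(1)} (I_{\ell^2} - \Pi_{\range{C_\Xi}}) M^{(2)} C_\Psi = 0$ for all bounded matrices $M^{(1)},M^{(2)}$. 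This reframes the theorem as: the identity holds for all matrices iff $\Pi_{\range{C_\Xi}} = I_{\ell^2}$, i.e.\ iff $C_\Xi$ is surjective, which by the characterization of Riesz bases recalled in the preliminaries is equivalent to $\Xi$ being a Riesz basis.

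The direction (ii)$\Rightarrow$(i) is then immediate: if $\Xi$ is a Riesz basis then $\range{C_\Xi} = \ell^2$, hence $\Pi_{\range{C_\Xi}} = I_{\ell^2}$ and the two expressions above coincide. (Alternatively one can invoke \cite{xxlframoper1} as done in the companion theorem just stated.)

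For (i)$\Rightarrow$(ii) I would argue by contraposition: assuming $\Xi$ is not a Riesz basis, $\range{C_\Xi}$ is a proper closed subspace of $\ell^2$, so there exists a nonzero $c_0 \in \range{C_\Xi}^\perp = \kernel{D_\Xi}$. I then construct $M^{(1)},M^{(2)}$ as rank-one operators to witness a failure of the identity: pick any index $k$ with $\psi_k \neq 0$ (such $k$ exists because $\Psi$ is a frame) and any $j$ with $\phi_j \neq 0$, and set $M^{(2)} = c_0 \otimes \delta_k$ and $M^{(1)} = \delta_j \otimes (c_0/\|c_0\|^2)$. A short computation then yields $D_\Phi M^{(1)} M^{(2)} C_\Psi f = \langle f,\psi_k\rangle\, \phi_j$, while on the right-hand side $\Pi_{\range{C_\Xi}} c_0 = 0$ forces $D_\Phi M^{(1)} \Pi_{\range{C_\Xi}} M^{(2)} C_\Psi f = 0$. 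Choosing $f$ with $\langle f,\psi_k\rangle \neq 0$ contradicts (i).

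The main obstacle is really just the bookkeeping in step (i)$\Rightarrow$(ii): one must ensure that the rank-one matrices chosen simultaneously produce a nonzero vector after composition with $D_\Phi$ on the left and after precomposition with $C_\Psi$ on the right, while still vanishing after projecting through $\Pi_{\range{C_\Xi}}$. Picking $c_0 \in \kernel{D_\Xi}$ and using standard unit vectors $\delta_j,\delta_k$ indexed by nonzero frame elements handles all three requirements at once. Everything else is a direct application of Theorem~\ref{sec:matbyfram1} and the identification $G_{\Xi,\tilde\Xi} = \Pi_{\range{C_\Xi}}$ from Section~2.
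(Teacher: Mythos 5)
Your proof is correct, and the easy direction (ii)$\Rightarrow$(i) coincides with the paper's (both reduce to $C_\Xi D_{\tilde\Xi}=\Pi_{\range{C_\Xi}}=\identity{\ell^2}$ for a Riesz basis). For (i)$\Rightarrow$(ii) you take a genuinely different route in the choice of test matrices: the paper argues directly, setting $M^{(1)}=C_{\tilde\Phi}D_E$ and $M^{(2)}=C_E D_{\tilde\Psi}$ for an auxiliary orthonormal basis $E$, so that the left-hand side becomes the identity while the right-hand side collapses to $D_E\, C_\Xi D_{\tilde\Xi}\, C_E$, forcing the Gram identity $C_\Xi D_{\tilde\Xi}=\identity{\ell^2}$ and hence that $\Xi$ is a Riesz basis; you instead argue by contraposition, using that $C_\Xi$ has closed range so non-surjectivity gives a nonzero $c_0\in\range{C_\Xi}^{\perp}=\kernel{D_\Xi}$, and then building rank-one matrices from $c_0$ and standard unit vectors $\delta_j,\delta_k$ (with $\phi_j\neq 0$, $\psi_k\neq 0$) whose composition is annihilated by the inserted projection but not without it. The paper's choice buys brevity and parallels its proof of the invertibility-vs-Riesz-basis theorem, though it quietly needs the auxiliary ONB(s) and identities like $D_\Phi C_{\tilde\Phi}=\identity{\Hil^1}$; your choice is more elementary and self-contained (no ONB of the underlying Hilbert spaces, only $\ell^2$ unit vectors), makes the use of the closed-range/adjoint fact $\kernel{D_\Xi}=\range{C_\Xi}^{\perp}$ explicit, and localizes exactly where the identity fails. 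Both arguments implicitly assume the Hilbert spaces $\Hil^1$ and $\Hil^3$ are nontrivial (otherwise (i) holds vacuously), so your appeal to the existence of nonzero $\phi_j$, $\psi_k$ is no additional restriction compared with the paper.
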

\begin{proof}
  According to Theorem~\ref{sec:matbyfram1}~(2) the left hand side in $(i)$ equals
   $$LHS = D_{\Phi} \circ M^{(1)} \cdot M^{(2)} \circ C_{\Psi} \quad ,$$
   whereas the right hand side equals
   $$RHS = D_{\Phi} \circ M^{(1)} \circ C_{\Xi} \circ D_{\tilde \Xi} \circ M^{(2)} \circ C_{\Psi} \quad .$$
   
(ii) $\Longrightarrow$ (i)   If $\Xi$ is a Riesz sequence, $ C_{\Xi} \circ D_{\tilde \Xi} = \identity{l_2}$, and so (i) is fullfilled.
   
(i) $\Longrightarrow$ (ii) Set $M^{(1)} := C_{\tilde \Phi} D_E$ and $M^{(2)} = C_E D_{\tilde \Psi}$, for any ONB $E$. Then 
$$ LHS = D_{\Phi} ~  C_{\tilde \Phi} ~ D_E ~  C_E ~ D_{\tilde \Psi}  ~ C_{\Psi} = \identity{l_2},  $$
and 
$$ RHS =   D_{\Phi} ~  C_{\tilde \Phi} ~ D_E ~ C_{\Xi} ~ D_{\tilde \Xi} ~  C_E ~ D_{\tilde \Psi}  ~  C_{\Psi} = D_E ~ C_{\Xi} ~ D_{\tilde \Xi} ~  C_E. $$
So $D_E ~ C_{\Xi} ~ D_{\tilde \Xi} ~  C_E = \identity{l_2}$, and therefore $C_{\Xi} ~ D_{\tilde \Xi} = \identity{l_2}$
 and $\Xi$ is a Riesz basis.
   
\end{proof}

\begin{remark}
More generally, we could also ask for a decomposition using two frames $\Xi^{(1)}$ and $\Xi^{(2)}$ for $\Hil^2$ instead of $\Xi$ and $\tilde \Xi$ respectively, leading to the condition that $\Xi^{(1)}$ and $\Xi^{(2)}$ are biorthogonal, and therefore get back to the assumptions in the theorem. 
\end{remark}

Applying this result to frame multipliers, which corresponds to operators induced by diagonal matrices, we get \cite{xxlmult1} Prop.7.4.

If, however, we tune the properties of the frame $\Xi$ to the properties of a specific pair of matrices $M^{(1)}$ and $M^{(2)}$, we can prove a corresponding result for the general frame case.

\subsection{Decomposition and frames}

\begin{theorem}\label{prop:decompframes}
Let $\Phi$, $\Xi$, and $\Psi$ be frames for $\Hil^1$, $\Hil^2$, and $\Hil^3$ respectively, and $M^{(1)}$ and $M^{(2)}$ be infinite matrices defining bounded operators from $\ell_2$ to $\ell_2$.
Then if
\begin{itemize}
\item[(a)] $ M^{(2)}(\range{C_{\Psi}}) \subseteq \range{C_{\Xi}} $~, or
\item[(b)] $\left(\kernel{ D_{\Phi} \circ M^{(1)}}\right)^{\bot} \subseteq \range{C_{\Xi}} $~,
\end{itemize}
we get that $
  \mathcal O^{(\Phi, \Psi)}(M^{(1)} \cdot M^{(2)})
  =
  \mathcal O^{(\Phi, \Xi)}(M^{(1)}) \circ \mathcal O^{(\tilde \Xi, \Psi)}(M^{(2)})$~.
\end{theorem}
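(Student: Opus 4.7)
The plan is to reduce both sides to the expressions derived in the proof of Theorem~\ref{prop:decompriesz} and then to analyze when the only obstruction disappears. Specifically, from Theorem~\ref{sec:matbyfram1}~(2) we have
$$LHS = D_\Phi \circ M^{(1)} \circ M^{(2)} \circ C_\Psi , \qquad RHS = D_\Phi \circ M^{(1)} \circ C_\Xi \circ D_{\tilde\Xi} \circ M^{(2)} \circ C_\Psi .$$
Since $C_\Xi \circ D_{\tilde\Xi} = G_{\Xi,\tilde\Xi} = \Pi_{\range{C_\Xi}}$ (by the identity recalled at the end of Section~2.1), the identity $LHS = RHS$ is equivalent to
$$D_\Phi \circ M^{(1)} \circ \bigl(I_{\ell^2} - \Pi_{\range{C_\Xi}}\bigr) \circ M^{(2)} \circ C_\Psi = 0 , \qquad (*)$$
so the whole task is to verify $(*)$ under either of the two hypotheses.

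For case (a), I would observe that the assumption $M^{(2)}(\range{C_\Psi}) \subseteq \range{C_\Xi}$ means exactly that the image of $M^{(2)}\circ C_\Psi$ lies in $\range{C_\Xi}$, so the projection $\Pi_{\range{C_\Xi}}$ acts as the identity on vectors of the form $M^{(2)} C_\Psi f$. Hence $(I_{\ell^2}-\Pi_{\range{C_\Xi}}) \circ M^{(2)} \circ C_\Psi = 0$ and $(*)$ is immediate.

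For case (b), I would apply orthogonal complements. Because $\Xi$ is a frame, $\range{C_\Xi}$ is closed, and because the kernel of any bounded operator is closed, the inclusion
$$\bigl(\kernel{D_\Phi \circ M^{(1)}}\bigr)^\perp \subseteq \range{C_\Xi}$$
is equivalent to
$$\range{C_\Xi}^\perp \subseteq \kernel{D_\Phi \circ M^{(1)}} .$$
Since $I_{\ell^2} - \Pi_{\range{C_\Xi}}$ is precisely the orthogonal projection onto $\range{C_\Xi}^\perp$, this inclusion gives $D_\Phi \circ M^{(1)} \circ (I_{\ell^2} - \Pi_{\range{C_\Xi}}) = 0$, and composing with $M^{(2)} \circ C_\Psi$ on the right yields $(*)$.

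The only step that requires any real care is the passage between the two equivalent formulations of condition (b) via orthogonal complements; the key facts there are the closedness of $\range{C_\Xi}$ (a frame property) and of kernels of bounded operators, so that $(V^\perp)^\perp = V$ in both directions. Once this equivalence is in hand, both (a) and (b) kill the central factor $(I_{\ell^2}-\Pi_{\range{C_\Xi}})$ in $(*)$ from opposite sides, and the theorem follows.
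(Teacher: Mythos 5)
Your proof is correct and takes essentially the same route as the paper: both sides are written via Theorem~\ref{sec:matbyfram1}~(2), the factor $C_\Xi \circ D_{\tilde\Xi}$ is identified with $\Pi_{\range{C_\Xi}}$, and each hypothesis is used to make this projection harmless from the right (case (a)) or from the left (case (b)). The only difference is cosmetic: you package the argument as the vanishing of $D_\Phi M^{(1)}(I_{\ell^2}-\Pi_{\range{C_\Xi}})M^{(2)}C_\Psi$ and spell out the orthogonal-complement step $\bigl(\kernel{D_\Phi\circ M^{(1)}}\bigr)^\perp \subseteq \range{C_\Xi} \Rightarrow \range{C_\Xi}^\perp \subseteq \kernel{D_\Phi\circ M^{(1)}}$, which the paper leaves implicit when asserting $D_\Phi\circ M^{(1)}\circ C_\Xi\circ D_{\tilde\Xi}=D_\Phi\circ M^{(1)}$.
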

\begin{proof} Since $\mathcal O^{(\Phi, \Xi)}(M^{(1)}) \circ \mathcal O^{(\tilde \Xi, \Psi)}(M^{(2)}) = D_{\Phi} \circ M^{(1)} \circ C_{\Xi} \circ D_{\tilde \Xi} \circ M^{(2)} \circ C_{\Psi} $ and $C_{\Xi} \circ D_{\tilde \Xi} = \Pi_{\range{C_{\Xi}}}$, we see that if $ M^{(2)}(\range{C_{\Psi}}) \subseteq \range{C_{\Xi}} $ it holds that $C_{\Xi} \circ D_{\tilde \Xi} \circ M^{(2)} \circ C_{\Psi} = M^{(2)} \circ C_{\Psi}$ and therefore the statements with assumption $(a)$ is true.\\
If, on the other hand, we assume that $\left(\kernel{ D_{\Phi} \circ M^{(1)}}\right)^{\bot} \subseteq \range{C_{\Xi}} $ then we get $D_{\Phi} \circ M^{(1)} \circ C_{\Xi} \circ D_{\tilde \Xi} = D_{\Phi} \circ M^{(1)}$, which finishes the proof.
\end{proof}

\begin{remark}
Again, instead of using a single frame and its dual for the decomposition, we could look at a pair of frames $\Xi^{(1)}$ and $\Xi^{(2)}$. This would result in the assumptions for Theorem~\ref{prop:decompframes} to be that
\begin{itemize}
\item[(a)] $C_{\Xi^{(1)}}D_{\Xi^{(2)}}|_{\range{M^{(2)} \circ C_{\Psi}}} = \identity{\range{M^{(2)} \circ C_{\Psi}}}$~, or
\item[(b)] $C_{\Xi^{(1)}}D_{\Xi^{(2)}}|_{\left(\kernel{ D_{\Phi} \circ M^{(2)}}\right)^{\bot}}=\identity{\left(\kernel{ D_{\Phi} \circ M^{(2)}}\right)^{\bot}} $~.
\end{itemize}
\end{remark}

\section{Summary and Outlook}

We have shown some basic properties of frame representations of operators, in particular with regard to their invertibility.

In the future, we are planning to investigate the relation of the operator representation using frames presented here with special focus on the finite section method and localized frames.
Furthermore we will apply this concept to the numerical solution of the Helmholtz equation using wavelet frames.

\subsection*{\bf Acknowledgments} 
The work on this paper was partly supported by the WWTF project MULAC ('Frame Multipliers:
Theory and Application in Acoustics; MA07-025) and by the Austrian Science
Fund (FWF) START-project FLAME ('Frames and Linear Operators for Acoustical Modeling and
Parameter Estimation'; Y 551-N13).

The authors are thankful to W. Kreuzer and J.-P. Antoine for valuable comments and discussions.

\providecommand{\bysame}{\leavevmode\hbox to3em{\hrulefill}\thinspace}
\providecommand{\MR}{\relax\ifhmode\unskip\space\fi MR }
\providecommand{\MRhref}[2]{%
  \href{http://www.ams.org/mathscinet-getitem?mr=#1}{#2}
}
\providecommand{\href}[2]{#2}

\end{document}